\documentclass[12pt]{amsart}
\usepackage{amsmath,amsthm,amsfonts,latexsym,amssymb,amscd,color,enumerate}
\usepackage{tikz}
\usetikzlibrary{patterns}
\usepackage{chemarr}

\setlength{\textwidth}{36true pc}
\setlength{\headheight}{8true pt} 
\setlength{\oddsidemargin}{0 truept}
\setlength{\evensidemargin}{0 truept}
\setlength{\textheight}{572true pt}
\newtheorem{theorem}{Theorem}[section]
\newtheorem{lemma}[theorem]{Lemma}
\newtheorem{corollary}[theorem]{Corollary}

\newtheorem{clm}[theorem]{Claim}
\newtheorem{case}{Case}
\newtheorem*{clm*}{Claim}

\newcommand{\cproof}{\noindent{\it Proof of Claim.}\ } 

\newcommand{\cqed}{\hfill\rule{1.3mm}{3mm}}

\theoremstyle{definition}

\theoremstyle{remark}

\numberwithin{equation}{section}

\begin{document}

\title{An uncountable
  J\'{o}nsson algebra in a minimal variety}

\author{Jordan DuBeau}
\address[Jordan DuBeau]{Department of Mathematics\\
University of Colorado\\
Boulder, CO 80309-0395\\
USA}
\email{jordan.dubeau@colorado.edu}

\author{Keith A. Kearnes}
\address[Keith Kearnes]{Department of Mathematics\\
University of Colorado\\
Boulder, CO 80309-0395\\
USA}
\email{keith.kearnes@colorado.edu}

\thanks{This material is based upon work supported by
  the National Science Foundation grant no.\ DMS 1500254.} 

\subjclass[2010]{Primary: 03C05; Secondary: 03C55, 08A30, 08B99}
\keywords{J\'{o}nsson algebra, J\'{o}nsson-Tarski algebra,
  minimal variety} 


\begin{abstract}
  We construct a J\'{o}nsson algebra of cardinality
  $\omega_1$ in the variety of J\'{o}nsson-Tarski algebras.
\end{abstract}

\maketitle

\section{Introduction}\label{intro_sec}
A \emph{J\'{o}nsson algebra} is  
an infinite algebra $J$ in a countable algebraic language,
which has no proper subalgebra of the same cardinality as $J$.
For example, the natural numbers, $\langle \mathbb N; S, 0\rangle$,
in the language of the successor function and $0$
is a J\'{o}nsson algebra.
Indeed, any infinite algebra that has no proper subalgebras at all
is a J\'{o}nsson algebra. Since J\'{o}nsson algebras have
countable languages, those
J\'{o}nsson algebras that have no proper subalgebras
must be countable.

Uncountable J\'{o}nsson algebras
are more difficult to construct
  and are more interesting.
  Keisler and Rowbottom announced in \cite{keisler-rowbottom}
  that if $V=L$, then there is a J\'{o}nsson algebra
  of every infinite cardinality. Erd\H{o}s and Hajnal
  showed in \cite{erdos-hajnal1} that if {\rm GCH} holds,
  then there is a J\'{o}nsson algebra of cardinality $\kappa^+$
  for every infinite cardinal $\kappa$. 
  Without {\rm GCH}, they proved that there is a
  J\'{o}nsson algebra of cardinality $\omega_n$ for every finite $n$.

  J\'{o}nsson algebras in specific varieties have been investigated.
  Some such results are surveyed in \cite{coleman}, but
  we list a few of the results here.
  Scott classified the J\'{o}nsson algebras in the variety
  of commutative groups
  in \cite{scott}: they are the Pr\"ufer $p^{\infty}$-groups,
  $\mathbb Z_{p^{\infty}}$,
  hence all have cardinality $\omega_0$.
  Shelah constructed J\'{o}nsson groups of cardinality $\omega_1$
  and of any cardinality $\lambda$ for which $\lambda^+=2^{\lambda}$
  in \cite{shelah1}. Ol'shanskii constructed
  Tarski Monsters in \cite{olshanskii}, which are
  special kinds of countable, noncommutative,
  J\'{o}nsson groups. McKenzie showed
  in \cite{mckenzie} that, under {\rm GCH},
  any J\'{o}nsson algebra in the variety of semigroups
  must be the underlying semigroup of a group.
  Hanf announced in \cite{hanf} that if there is
  a J\'{o}nsson algebra of cardinality $\kappa$, then a
  J\'{o}nsson algebra of cardinality $\kappa$ can be found
  in the variety of commutative loops.  
  Whaley proved in \cite{whaley} that there are no
  J\'{o}nsson algebras of regular cardinality in the variety
  of lattices.

  The question we consider is: Is there a J\'{o}nsson algebra
  in a minimal variety?
  This question is motivated by the observation
  that, if $J$ is one of the known J\'{o}nsson
  algebras, then some cyclic (= $1$-generated)
  subalgebras of $J$ seem
  to satisfy more identities than $J$ itself does.

  In Section~\ref{res_fin} of this paper
  we prove that there is no
  residually finite J\'{o}nsson algebra in a minimal variety
  (Corollary~\ref{RS2}).
  In Section~\ref{minimal} we prove that the variety of
  J\'{o}nsson-Tarski algebras is minimal
  (Corollary~\ref{minl}), and in Section~\ref{JJT}
  we describe an uncountable J\'{o}nsson algebra in the variety
  of J\'{o}nsson-Tarski algebras (Theorem~\ref{uncountable}).

\section{Residually finite J\'{o}nsson algebras}\label{res_fin}  

We started our investigation into whether there is a J\'{o}nsson algebra
in a minimal variety by examining
minimal \emph{locally finite} varieties, where much is known.
For example, the minimal, locally finite varieties
containing a nontrivial solvable member
have been classified,
\cite{kkv1,szendrei1,szendrei2}, and none 
contain a J\'{o}nsson algebra.
The minimal, locally finite, idempotent varieties
have been classified
\cite{szendrei3,szendrei4}, and none 
contain a J\'{o}nsson algebra.
In this section we will prove that there is no
residually finite J\'{o}nsson algebra in a minimal
variety, but first let us explain the connection
to J\'{o}nsson algebras in minimal locally finite varieties.

It is known that any minimal locally
finite variety $\mathcal V$ possesses a ``localization functor''
$e: {\mathcal V}\to e({\mathcal V})$, where the target
variety $e({\mathcal V})$ is a minimal, locally finite,
\emph{term minimal} variety (see \cite{kearnes-szendrei}).
Moreover, the target varieties
have been classified
into 20 types of minimal, locally finite,
term minimal varieties, \cite{szendrei-tm}.
This suggests a path to proving that there is no
J\'{o}nsson algebra in a minimal, locally finite variety.
First one should examine the 20 types
of minimal, locally finite varieties
of the form $e({\mathcal V})$, and prove that none of them
contains a J\'{o}nsson algebra.
Then one should show that the localization functor
reflects the absence of J\'{o}nsson algebras.

We were able to complete the first step for 19 of the 20 types,
but we were unable to complete this step for varieties
$e({\mathcal V})$ of the type
generated by strictly simple, $G^0$-algebras of type ${\bf 5}$,
whose complex nature is examined in \cite{kearnes-szendrei2}.
The method we used was to recognize that 19 of the
20 types of minimal, locally finite, term minimal varieties
consist of residually finite algebras. Then, 
we observed that there is no residually finite J\'{o}nsson
algebra in a minimal variety. We record the proof of this last statement
in this section.

The 20th type of minimal, locally finite, term minimal variety
need not consist of residually finite algebras, and we were
not able to determine whether there are varieties of this type which contain
J\'{o}nsson algebras.
We were also unable to prove that the localization functor
reflects the nonexistence of uncountable J\'{o}nsson algebras.
So, while it seems this circle of ideas
might still represent a viable path toward
proving the nonexistence of uncountable J\'{o}nsson
algebras in minimal, locally finite varieties,
in this section we shall only explain why
there are no residually finite J\'{o}nsson algebras in minimal varieties.

\begin{theorem}\label{theta}
  Let $\theta$ be a congruence on a J\'{o}nsson algebra $J$.
  \begin{enumerate}
  \item[$(1)$]
    If $|J/\theta|=|J|$, then $J/\theta$ is also a J\'{o}nsson algebra.
\item[$(2)$]
  If $|J/\theta|<\textrm{\rm cf}(|J|)$, then $J/\theta$ is
  cyclic.
\item[$(2)'$]
  If $\theta$ is a uniform congruence
  (all $\theta$-classes have the same size)
  and $|J/\theta|<|J|$, then $J/\theta$ is a
  cyclic algebra that is generated by any one of its elements.
 \end{enumerate}
\end{theorem}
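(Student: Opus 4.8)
The plan is to exploit the correspondence between subalgebras of $J/\theta$ and subalgebras of $J$ obtained by pulling back along the quotient map. Write $\pi\colon J\to J/\theta$ for the canonical surjection and set $\kappa=|J|$. The single observation driving all three parts is the following: if $S$ is a subalgebra of $J/\theta$, then $\pi^{-1}(S)$ is a subalgebra of $J$, and $\pi^{-1}(S)=J$ if and only if $S=J/\theta$ (because $\pi$ is onto). Hence, whenever $S$ is a \emph{proper} subalgebra of $J/\theta$, the preimage $\pi^{-1}(S)$ is a proper subalgebra of $J$, so the J\'onsson property of $J$ forces $|\pi^{-1}(S)|<\kappa$. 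I would record this as a preliminary remark and then read off each of (1), (2), (2)$'$ from it.

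For (1), suppose $B\le J/\theta$ has $|B|=|J/\theta|=\kappa$; I must show $B=J/\theta$. Since $\pi$ maps $\pi^{-1}(B)$ onto $B$, we have $|\pi^{-1}(B)|\ge|B|=\kappa$, so $\pi^{-1}(B)$ is a subalgebra of $J$ of cardinality $\kappa=|J|$. By the J\'onsson property applied to $J$ itself, $\pi^{-1}(B)=J$, whence $B=\pi(\pi^{-1}(B))=J/\theta$. As $J/\theta$ is infinite and lies in the same countable language, it is a J\'onsson algebra.

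For (2) I would argue by contradiction, covering $J$ by pullbacks of cyclic subalgebras. For $b\in J/\theta$ let $\langle b\rangle$ denote the subalgebra of $J/\theta$ generated by $b$. If $J/\theta$ were not cyclic, then $\langle b\rangle\ne J/\theta$ for every $b$, so each $\pi^{-1}(\langle b\rangle)$ is a proper subalgebra of $J$ and therefore has cardinality $<\kappa$. But every $x\in J$ satisfies $\pi(x)\in\langle\pi(x)\rangle$, hence $x\in\pi^{-1}(\langle\pi(x)\rangle)$; thus
\[
  J=\bigcup_{b\in J/\theta}\pi^{-1}(\langle b\rangle)
\]
exhibits $J$ as a union of $|J/\theta|<\operatorname{cf}(\kappa)$ subalgebras each of size $<\kappa$. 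The standard cofinality estimate—a union of fewer than $\operatorname{cf}(\kappa)$ sets each of cardinality $<\kappa$ has cardinality $<\kappa$—then gives $|J|<\kappa$, a contradiction. Hence some $b$ generates $J/\theta$. This cofinality bookkeeping is the only real content of the argument and is exactly where I expect the hypothesis $|J/\theta|<\operatorname{cf}(\kappa)$ to be used essentially.

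For (2)$'$ the uniformity hypothesis replaces the cofinality count. If all $\theta$-classes share a common size $m$ and $|J/\theta|=\lambda<\kappa$, then $\kappa=|J|=\lambda\cdot m$; since $\lambda<\kappa$ and $\kappa$ is infinite, elementary cardinal arithmetic forces $m=\kappa$, i.e.\ every $\theta$-class has cardinality $|J|$. Now fix any $b\in J/\theta$. The subalgebra $\pi^{-1}(\langle b\rangle)$ contains the entire $\theta$-class $\pi^{-1}(\{b\})$, so $|\pi^{-1}(\langle b\rangle)|\ge\kappa$; by the driving observation a proper subalgebra of $J/\theta$ would pull back to a set of size $<\kappa$, so $\langle b\rangle$ cannot be proper and $\langle b\rangle=J/\theta$. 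As $b$ was arbitrary, $J/\theta$ is generated by any one of its elements. The only points needing care here are the cardinal arithmetic identifying $m=\kappa$ and the observation that the pullback of a one-generated subalgebra already swallows a full class.
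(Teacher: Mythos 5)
Your proposal is correct. Parts (1) and (2)$'$ are essentially the paper's arguments: (1) is the direct form of the paper's contrapositive (pull back a full-size subalgebra and invoke the J\'onsson property of $J$), and (2)$'$ uses the same cardinal arithmetic $\lambda\cdot\mu=\kappa$ to conclude every class has size $\kappa$, then notes that the pullback of $\langle b\rangle$ contains a full class. Part (2) is where you genuinely diverge: the paper pigeonholes on the $\theta$-classes themselves --- since $J$ is partitioned into fewer than $\mathrm{cf}(\kappa)$ classes, some class $j/\theta$ has size $\kappa$, hence generates $J$, and the Second Isomorphism Theorem shows its image generates $J/\theta$ --- whereas you cover $J$ by the pullbacks $\pi^{-1}(\langle b\rangle)$ of all cyclic subalgebras and derive a contradiction from the cofinality estimate on a union of fewer than $\mathrm{cf}(\kappa)$ sets of size $<\kappa$. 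Both hinge on the same cofinality fact; the paper's route has the small advantage of exhibiting an explicit generator (the large class), which is also exactly what makes (2)$'$ an immediate variant, while yours is a clean non-constructive argument that needs only the subalgebra correspondence and no isomorphism theorem. Either is acceptable.
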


\begin{proof}
  Let $\kappa=|J|$ and let $\nu: J\to J/\theta$ be the natural map.

  To prove the contrapositive of Item (1),
  assume $S\leq J/\theta$ is a proper subalgebra of 
  $J/\theta$ of size $\kappa$. Then $\nu^{-1}(S)$ is a proper subalgebra
  of size $\kappa$ of $J$, contradicting
  the assumption that $J$ is J\'{o}nsson.

  To prove Item~(2), note that the $\theta$-classes
  partition $J$ into $(<\textrm{cf}(\kappa))$-many subsets.
  Necessarily one of these classes, say $j/\theta$, has size $\kappa$.
  Since $J$ is J\'{o}nsson, the subalgebra $\langle j/\theta\rangle$
  generated by this class is $J$ itself. The Second Isomorphism
  Theorem implies that the quotient
  $J/\theta$ is generated by the single $\theta$-class $j/\theta$.

  Item~$(2)'$ is like Item~(2) with a minor difference.
  Let $\lambda=|J/\theta|<|J|=\kappa$ and let $\mu$
  be the class size for $\theta$.
  Necessarily, $\mu\cdot \lambda=\kappa>\lambda$,
  so $\mu=\kappa$ and every $\theta$-class $j/\theta$ has size $\kappa$.
  As in the proof of (2), this implies that every element of
  $J/\theta$ generates $J/\theta$. 
\end{proof}

\begin{corollary}
  Let $J$ be a J\'{o}nsson algebra which has a
  finite bound $n$ on the size of its cyclic subalgebras.
  $J$ is not residually finite.
\end{corollary}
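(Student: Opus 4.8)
The plan is to argue by contradiction: assume $J$ is residually finite and derive a contradiction with the Jónsson property using Theorem~\ref{theta}. The first step is to observe that the hypotheses force every \emph{finite} quotient of $J$ to be small. Indeed, if $J/\theta$ is finite then $|J/\theta|<\aleph_0\leq\mathrm{cf}(|J|)$, so by Theorem~\ref{theta}(2) the quotient $J/\theta$ is cyclic, say $J/\theta=\langle\nu(a)\rangle$ where $\nu\colon J\to J/\theta$ is the natural map and $a\in J$. Since a homomorphism carries the cyclic subalgebra $\langle a\rangle$ onto $\langle\nu(a)\rangle$, we get $|J/\theta|=|\nu(\langle a\rangle)|\leq|\langle a\rangle|\leq n$. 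Thus the finite quotients of $J$ are precisely the quotients of size at most $n$.

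Next I would study the family $\Theta=\{\theta\in\operatorname{Con}(J):J/\theta\text{ is finite}\}$ of finite-index congruences. Residual finiteness is exactly the statement that $\bigwedge\Theta$ is the equality relation $\Delta_J$, since the finite quotients separate points. The family $\Theta$ is closed under finite meets: for $\theta,\theta'\in\Theta$ the algebra $J/(\theta\wedge\theta')$ embeds into $J/\theta\times J/\theta'$, hence is finite and so lies in $\Theta$ (and, by the first step, again has size at most $n$). So $\Theta$ is a nonempty (it contains $\nabla_J$), down-directed subfamily of the congruence lattice, every member of which has index at most $n$.

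The crux of the argument, and the step I expect to be the main obstacle to get exactly right, is to exploit the tension between the uniform bound $n$ on the indices and the triviality of the total meet. Because the indices $|J/\theta|$ for $\theta\in\Theta$ are integers in $\{1,\dots,n\}$, they attain a maximum value $m$ at some $\theta_0\in\Theta$. For any $\theta\in\Theta$, the congruence $\theta\wedge\theta_0$ again lies in $\Theta$ and refines $\theta_0$, so $J/(\theta\wedge\theta_0)$ surjects onto $J/\theta_0$; maximality of $m$ forces these two finite algebras to have equal size, whence the surjection is a bijection and $\theta\wedge\theta_0=\theta_0$, that is, $\theta_0\leq\theta$. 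Thus $\theta_0$ is a \emph{least} element of $\Theta$.

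Finally, $\theta_0\leq\theta$ for every $\theta\in\Theta$ yields $\theta_0\leq\bigwedge\Theta=\Delta_J$, so $\theta_0=\Delta_J$ and $J\cong J/\theta_0$ is finite of size $m\leq n$, contradicting the fact that a Jónsson algebra is infinite. The only delicate point is the passage from ``bounded index and closed under meets'' to ``possesses a least element,'' which is precisely where the integrality of the indices is essential; the remaining steps are routine applications of the isomorphism theorems.
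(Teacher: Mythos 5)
Your proof is correct, but the second half takes a genuinely different route from the paper's. Both arguments share the same key input: by Theorem~\ref{theta}(2), every finite quotient of $J$ is cyclic, hence has size at most $n$ (your first step, with the observation that finite $<\aleph_0\leq\mathrm{cf}(|J|)$, is exactly how the paper uses that theorem). From there the paper argues more directly: it picks $n+1$ elements of $J$, separates each pair by a finite-index congruence, intersects these \emph{finitely many} congruences to obtain a single finite quotient with at least $n+1$ elements, and contradicts the bound $n$ on that one quotient. You instead prove a structural fact: the family $\Theta$ of finite-index congruences is meet-closed with indices bounded by $n$, so a congruence of maximal index is a least element of $\Theta$; residual finiteness forces that least element to be $\Delta_J$, making $J$ finite and contradicting the infinitude of a J\'{o}nsson algebra. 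Your directedness-plus-maximality step is sound (the surjection $J/(\theta\wedge\theta_0)\to J/\theta_0$ between finite sets of equal size is a bijection, giving $\theta_0\leq\theta$), and it isolates a general principle --- a residually finite algebra whose finite quotients have uniformly bounded size is itself finite --- that the J\'{o}nsson hypothesis only enters to supply the bound and the infinitude. The paper's pigeonhole argument is shorter and avoids any appeal to maximality over an infinite family; yours buys a cleaner separation between the universal-algebraic content and the cardinality content.
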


\begin{proof}
  Suppose to the contrary that $J$ is residually finite
  and has a finite bound $n$ on the size of its cyclic subalgebras.
  Choose $n+1$ elements $a_i\in J$, $i=0,\ldots,n$.
  For each $i<j$ select a congruence $\theta_{ij}$
  on $J$ that has finite index and satisfies
  $(a_i,a_j)\notin \theta_{ij}$.
  If $\theta=\cap_{i<j} \theta_{ij}$,
  then $J/\theta$ is finite. By Theorem~\ref{theta}~(2),
  there is an element $a\in J$ such that
  $a/\theta$ generates $J/\theta$. 
  Hence
  \[
  |J/\theta|=|\langle a/\theta\rangle| = |\langle a\rangle /\theta|\leq
  |\langle a\rangle|\leq n.
  \]
  But $a_i/\theta\neq a_j/\theta$
  for any $i<j$, so $|J/\theta|\geq n+1$, which is a contradiction.
\end{proof}

The hypothesis of the previous corollary asserting
that $J$ has a finite bound on the size of its cyclic subalgebras
will hold if $J$ belongs to a variety whose free
algebra on one generator, $F_{\mathcal V}(1)$, is finite.
Call such a variety \emph{$1$-finite}.

\begin{corollary}\label{RS}
  A $1$-finite variety contains no residually finite
  J\'{o}nsson algebra. Hence a locally finite variety
  contains no residually finite
  J\'{o}nsson algebra. $\Box$
\end{corollary}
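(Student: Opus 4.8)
The plan is to reduce both assertions to the previous corollary, which states that a J\'{o}nsson algebra with a finite bound on the size of its cyclic subalgebras is not residually finite. The only work is to verify that membership in a $1$-finite variety supplies exactly such a bound.

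First I would recall that if $\mathcal V$ is any variety and $A\in\mathcal V$, then every cyclic subalgebra $\langle a\rangle\leq A$ is a homomorphic image of the free algebra $F_{\mathcal V}(1)$: the map sending the free generator to $a$ extends to a surjective homomorphism $F_{\mathcal V}(1)\to\langle a\rangle$. Consequently $|\langle a\rangle|\leq|F_{\mathcal V}(1)|$. If $\mathcal V$ is $1$-finite, so that $n:=|F_{\mathcal V}(1)|$ is finite, then $n$ is a finite bound on the size of the cyclic subalgebras of every member of $\mathcal V$. In particular, any J\'{o}nsson algebra $J\in\mathcal V$ satisfies the hypothesis of the previous corollary with this value of $n$, so $J$ is not residually finite. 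This proves the first assertion.

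For the second assertion I would observe that every locally finite variety is $1$-finite: the free algebra $F_{\mathcal V}(1)$ is generated by a single element, hence is finitely generated, and in a locally finite variety every finitely generated algebra is finite. Thus $F_{\mathcal V}(1)$ is finite, and the claim follows from the first assertion.

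Since the argument is a direct two-step specialization, there is no genuine obstacle here; the one point to get right is the identification of $|F_{\mathcal V}(1)|$ as the bound $n$ appearing in the hypothesis of the preceding corollary, i.e.\ recognizing that $1$-finiteness is precisely what guarantees a finite bound on the size of cyclic subalgebras throughout the variety.
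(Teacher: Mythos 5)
Your proposal is correct and matches the paper's intended argument exactly: the paper states this corollary with only a $\Box$, relying on the remark immediately preceding it that cyclic subalgebras are bounded in size by $|F_{\mathcal V}(1)|$, which is precisely the reduction you carry out. Your additional observation that locally finite implies $1$-finite is the same one-line step the paper leaves implicit.
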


\begin{corollary}\label{RS2}
  There is no residually finite J\'{o}nsson algebra
  in a minimal variety.
\end{corollary}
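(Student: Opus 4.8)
The plan is to reduce the claim to the locally finite case that Corollary~\ref{RS} already settles. Suppose, for contradiction, that $J$ is a residually finite J\'{o}nsson algebra lying in a minimal variety $\mathcal V$. Since $J$ is infinite, it has two distinct elements $a\neq b$, and residual finiteness supplies a congruence $\theta$ of finite index on $J$ with $(a,b)\notin\theta$. The quotient $Q=J/\theta$ is then a \emph{finite} member of $\mathcal V$ with at least two elements, hence nontrivial. The point of this step is merely to extract one nontrivial finite algebra from $\mathcal V$.

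Next I would exploit minimality. A minimal variety has no proper nontrivial subvarieties, so every nontrivial member generates it; in particular $\mathcal V=\mathrm{HSP}(Q)$. Now I invoke the classical fact that a variety generated by a single finite algebra is locally finite: the free algebra $F_{\mathcal V}(n)$ embeds into a finite power of $Q$ and is therefore finite for each $n<\omega$. Thus $\mathcal V$ is locally finite. Having shown this, Corollary~\ref{RS} applies directly, since it asserts that a locally finite variety contains no residually finite J\'{o}nsson algebra; this contradicts the presence of $J$ in $\mathcal V$ and finishes the argument.

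The only nonroutine ingredient is the reduction itself --- recognizing that minimality forces $\mathcal V$ to be generated by the finite quotient $Q$, and hence to be locally finite. I expect this to be the one place where the hypothesis of minimality is genuinely used: without it, a residually finite algebra need not generate a locally finite variety, and the passage to Corollary~\ref{RS} would fail. Once $\mathcal V$ is known to be locally finite, no further obstacle remains beyond citing the standard local finiteness of finitely generated varieties.
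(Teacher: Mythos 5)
Your argument is correct and is essentially identical to the paper's own proof: extract a nontrivial finite quotient of $J$ via residual finiteness, use minimality to conclude that this quotient generates $\mathcal V$, note that a finitely generated variety is locally finite, and then apply Corollary~\ref{RS}. The only difference is that you spell out the separation of two elements explicitly, which the paper leaves implicit.
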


\begin{proof}
  Suppose that $J$ is a residually finite J\'{o}nsson algebra
  in a minimal variety $\mathcal V$.
  Then $J$ has a nontrivial finite quotient,
  which necessarily generates $\mathcal V$,
  hence $\mathcal V$ is a finitely generated variety.
  Finitely generated varieties are locally finite,
  so this corollary follows from Corollary~\ref{RS}.
\end{proof}

In fact, we do not know if there are any uncountable,
residually finite J\'{o}nsson algebras at all.\footnote{%
  $\langle \mathbb N; S, 0\rangle$ and the unital ring
$\langle \mathbb Z; \cdot, +, -, 0, 1\rangle$ are examples
of countable, residually finite J\'{o}nsson algebras.}
The results above do place a cardinality
bound on the size of residually finite J\'{o}nsson algebras,
which we record.

\begin{theorem}
  If $J$ is a residually finite J\'{o}nsson algebra, then
  $|J|\leq 2^{2^{\omega}}$. If $J$ is defined in a finite language,
  then $|J|\leq 2^{\omega}$.
\end{theorem}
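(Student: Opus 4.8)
The plan is to embed $J$ into the product of all of its finite quotients and then bound the number of factors. Write $\kappa=|J|$ and let $\Theta$ be the set of finite-index congruences on $J$. Residual finiteness says exactly that $\bigcap\Theta$ is the identity congruence, so the natural map $J\to\prod_{\theta\in\Theta}J/\theta$ is an embedding and $|J|\le\omega^{|\Theta|}$. Everything therefore reduces to bounding $|\Theta|$, and I would organize this by grouping the congruences according to the isomorphism type of the quotient they produce.

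The heart of the argument is the following claim: for each finite algebra $B$, there are at most $|B|$ congruences $\theta\in\Theta$ with $J/\theta\cong B$. To see this, suppose $\theta_1,\dots,\theta_m\in\Theta$ are distinct with each $J/\theta_i\cong B$, and set $\rho=\bigcap_i\theta_i$. The quotient $D:=J/\rho$ embeds subdirectly into $\prod_i J/\theta_i$, hence is finite; being a finite quotient of the J\'onsson algebra $J$ with $|D|<\mathrm{cf}(\kappa)$, Theorem~\ref{theta}(2) shows $D$ is cyclic, say $D=\langle d\rangle$. Since $\rho\subseteq\theta_i$, the $\theta_i$ induce $m$ distinct congruences $\theta_i/\rho$ on $D$, with $D/(\theta_i/\rho)\cong J/\theta_i\cong B$, so these are the kernels of $m$ surjections $D\to B$ with distinct kernels. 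But a homomorphism out of the one-generated algebra $D=\langle d\rangle$ is completely determined by the image of $d$, so there are at most $|B|$ homomorphisms $D\to B$ at all; hence $m\le|B|$. Letting the finite family $\{\theta_1,\dots,\theta_m\}$ vary gives the claim.

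With the claim in hand the bound falls out by cardinal arithmetic. The number of isomorphism types of finite algebras is at most $2^{\omega}$ in a countable language, and at most $\omega$ in a finite language, since on a universe of size $n$ each operation symbol is interpreted as one of finitely many functions. Summing the claim over all isomorphism types $B$ gives $|\Theta|\le 2^{\omega}$ in the countable case and $|\Theta|\le\omega$ in the finite case, whence $|J|\le\omega^{2^{\omega}}=2^{2^{\omega}}$ and $|J|\le\omega^{\omega}=2^{\omega}$ respectively. I expect the middle claim to be the only real obstacle: once one notices that the joint quotient by finitely many of the $\theta_i$ is forced to be cyclic by Theorem~\ref{theta}(2), the rigidity of maps out of a cyclic algebra finishes the count, and the two stated cardinal bounds end up differing only through the number of isomorphism types of finite quotients.
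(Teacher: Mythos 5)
Your argument is correct. The skeleton is the same as the paper's---embed $J$ into a product of finite quotients and reduce the count of factors to the number of isomorphism types of finite algebras, with the cyclicity of finite quotients (Theorem~\ref{theta}(2)) doing the real work---but the key intermediate step is genuinely different. The paper takes one factor per finite algebra $A$ in the variety ${\mathcal V}(J)$: it uses the finiteness of $F_{{\mathcal V}(A)}(1)$ to bound all cyclic algebras in ${\mathcal V}(A)$, deduces that there is a \emph{least} congruence $\theta_A$ with $J/\theta_A\in{\mathcal V}(A)$, and embeds $J$ into $\prod_A J/\theta_A$. You instead index the product by \emph{all} finite-index congruences and prove a multiplicity bound: at most $|B|$ congruences can have quotient isomorphic to a fixed finite $B$, because the joint quotient $D=J/\bigcap_i\theta_i$ is finite, hence cyclic by Theorem~\ref{theta}(2), and maps out of a one-generated algebra are determined by the image of the generator. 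Your multiplicity lemma is arguably the cleaner mechanism---it avoids the free-algebra bound $|F_{{\mathcal V}(A)}(1)|\le|A|^{|A|}$ and the existence argument for the least congruence $\theta_A$---while the paper's version packages the same cyclicity input into a single canonical quotient per subvariety. Both yield $|\Theta|\le 2^{\omega}$ (resp.\ $\le\omega$ for a finite language) and hence $|J|\le\omega^{2^{\omega}}=2^{2^{\omega}}$ (resp.\ $\omega^{\omega}=2^{\omega}$), so the two proofs deliver identical bounds.
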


\begin{proof}
  Let $\mathcal V = {\mathcal V}(J)$ be the variety generated by $J$,
  and let $A\in {\mathcal V}$ be any finite member.
  All cyclic algebras
  in the subvariety ${\mathcal V}(A)\leq {\mathcal V}$ have size
  bounded by $|F_{{\mathcal V}(A)}(1)|\leq |A|^{|A|}$, which is finite.
  Since the finite quotients of $J$ are cyclic,
  $J$ has a maximal finite quotient that lies in this subvariety.
  That is, $J$ has a least congruence $\theta_A$ such that
  $J/\theta_A\in {\mathcal V}(A)$ (and $|J/\theta_A|\leq |A|^{|A|}$ will hold).
  Now $J$ is embeddable in
  \[\prod_{A\in {\mathcal V}, |A|<\omega} J/\theta_A,\]
  since $J$ is residually finite. The factors in this product
  are finite, while the number of factors in this displayed product equals
  the number of finite algebras in ${\mathcal V}$.
  A variety in a countable language has at most
  $2^{\omega}$ finite members, leading to the cardinality bound
  \[|J|\leq \left|\prod_{A\in {\mathcal V}, |A|<\omega} J/\theta_A\right|
  \leq
  \omega^{2^{\omega}}=
  2^{2^{\omega}}\]
  when the language is countable.
  A variety in a finite language has at most
  countably many finite members, so the same calculation produces
  the bound
  $|J|\leq 2^{\omega}$ when the language is finite.
\end{proof}

We shall leave open the following questions.

\bigskip

\noindent
{\bf Question 1.} Is there 
an uncountable, residually finite J\'{o}nsson algebra?

\bigskip

\noindent
{\bf Question 2.} Is there an uncountable J\'{o}nsson
algebra in a minimal, locally finite variety?

\bigskip

Henceforth we concentrate on
constructing a non-locally finite, minimal variety
that contains an uncountable J\'{o}nsson algebra.
Specifically, we construct a J\'{o}nsson algebra
of cardinality $\omega_1$ in the variety
of J\'{o}nsson--Tarski algebras.

\section{The variety of J\'{o}nsson-Tarski algebras is minimal}\label{minimal}
A \textit{J\'{o}nsson-Tarski algebra} is an algebra
$\langle A; \cdot, \ell, r \rangle$ in the language $\mathcal L$ of
one binary operation $\cdot$ and two unary operations
$\ell$ and $r$, which satisfies identities expressing 
that
\[
A\times A\to A: (x,y)\mapsto x\cdot y,\quad\textrm{and} \quad
A\to A\times A: z\mapsto (\ell(z),r(z))
\]
are inverse bijections. These identities are 
\begin{enumerate}
\item[$\varepsilon({\ell})$:]\quad $\ell(x \cdot y) = x$,
\item[$\varepsilon({r})$:]\quad  $r(x \cdot y) = y$, and
\item[$\varepsilon({\cdot})$:]\quad  $\ell(z) \cdot r(z) = z$.
\end{enumerate}
Let $\Sigma=\{\varepsilon({\ell}), \varepsilon({r}), \varepsilon({\cdot})\}$
denote the set of these identities.

J\'{o}nsson and Tarski introduced the variety
axiomatized by $\Sigma$ in \cite{jonsson-tarski}.
J\'{o}nsson and Tarski were interested in this variety
because it has the property 
that the algebra $F_{\mathcal V}(n)$ freely generated by
the $n$-element set $X=\{x_1,\ldots,x_{n-1},x_n\}$, $n > 1$,
is also freely generated by the $(n-1)$-element set
$\{x_1,\ldots,(x_{n-1}\cdot x_n)\}$, where the last two
elements of $X$ are replaced by their product.
Consequently $F_{\mathcal V}(n)\cong F_{\mathcal V}(n-1)$
in this variety, and in fact any two free algebras with
a finite, positive number of generators are isomorphic.
We mention this for information only.
We shall not use anything from \cite{jonsson-tarski}
other than the definition of the variety of J\'{o}nsson-Tarski algebras.

We call the language of the binary symbol $\cdot$
the ``multiplication sublanguage'', $\mathcal L_m$,
and the language of the two unary symbols
only, $\ell$ and $r$, the ``unary sublanguage'', $\mathcal L_u$.
Call an $\mathcal L$-term an ``$m,u$-term''
if it has the form
$w(u_1(x_{i_1}),\ldots,u_k(x_{i_k}))$
where $w(x_1,\ldots,x_k)$ is an $\mathcal L_m$-term
and the $u_j(x_{i_j})$
are $\mathcal L_u$-terms.
Let $\mathcal {MU}$ denote the set of $m,u$-terms
in the language $\mathcal L$.

\begin{lemma}\label{NF1}
Every $\mathcal L$-term is $\Sigma$-equivalent to
a term in $\mathcal {MU}$.
\end{lemma}

\begin{proof}
  The set of $\mathcal L$-terms is the smallest
  set containing the variables and closed under
  multiplication and $\ell$ and $r$. 
  $\mathcal {MU}$ 
  contains the variables and is closed under multiplication,
  so we only need to show that, for any $t\in \mathcal {MU}$,
  $\ell(t)$ and $r(t)$ are $\Sigma$-equivalent to
  terms in $\mathcal {MU}$.
  
  If $t$ contains
  at least one occurrence of the multiplication symbol,
  then $t = p\cdot q$ where $p, q\in \mathcal {MU}$.
  Then $\ell(t)$ and $r(t)$
  are $\Sigma$-equivalent to 
  $p$ and $q$ respectively, which belong to $\mathcal {MU}$.
  If $t$ does not contain
  an occurrence of the multiplication symbol,
  then $t$ is a term in the sublanguage $\mathcal L_u$.
  In this case $\ell(t)$ and $r(t)$ are also
  terms in the sublanguage $\mathcal L_u$, so they are
  in $\mathcal {MU}$.
\end{proof}  

\begin{lemma} \label{NF2}
  If $s$ and $t$ are $\mathcal L$-terms, then either
  \begin{enumerate}
\item    
  $\Sigma$ entails the identity $s = t$, or
\item    
  $\Sigma\cup\{s=t\}$ entails $x=y$.
  \end{enumerate}
\end{lemma}

\begin{proof}
  We argue by induction on the total number of
  multiplication symbols occurring in $s=t$.
  
  Replace $s$ and $t$ with $\Sigma$-equivalent
  $m,u$-terms, each involving a minimal number of
  multiplication symbols. This does not affect the assumptions
  nor the conclusions of the theorem. Now we consider cases.

  \begin{case}\label{case1}
At least one of $s$ or $t$ has at least one multiplication symbol.
  \end{case}

\begin{caseproof}  
  In this case, the identities
  $\ell(s)=\ell(t)$ and $r(s)=r(t)$, which
  are both derivable from $s=t$, each
  have fewer total multiplication symbols than $s=t$ after
  reducing modulo $\Sigma$.
  By induction we have either
  \begin{enumerate}
\item[(I)]
  $\Sigma$ entails $\ell(s) = \ell(t)$ and $r(s)=r(t)$, or
\item[(II)] 
  $\Sigma\cup\{\ell(s)=\ell(t)\}$ entails $x=y$ or
  $\Sigma\cup\{r(s)=r(t)\}$ entails $x=y$.
  \end{enumerate}
  Under Item (I), $\Sigma$ entails
  $s = \ell(s)\cdot r(s)=\ell(t)\cdot r(t) = t$, and we get Item (1)
  of the theorem statement.
  Under Item (II), $\Sigma\cup\{s=t\}$ entails
  both $\ell(s)=\ell(t)$ and $r(s)=r(t)$, hence entails $x=y$.
  This gives us Item (2)
  of the theorem statement.
  \end{caseproof}
    
  \bigskip

  For the rest of the proof we may restrict attention
  to the case where $s$ and $t$ are $\mathcal L_u$-terms.
  We write a term like $\ell(\ell(r(\ell(r(r(z))))))$
  as $\ell\ell r\ell rr(z)$ and refer to $z$
  as the variable of the term. We refer  
  to the sequence of $\ell$'s
  and $r$'s (which is $\ell\ell r\ell rr$
  in this example) as the prefix. We refer 
  to the number of symbols in the prefix
    (which is six in this example) as the length
  of the term. We refer to the underlined
  symbol $\ell\ell r\ell r\underline{r}(z)$
  as the rightmost operation symbol of the term.
  Without loss of generality, we assume that the 
  length of $s$ is at least that of $t$.

  \begin{case}\label{case2}
$s$ and $t$ have different variables and the same prefix.
  \end{case}
  
  \begin{caseproof}
    In this case we show by induction on the combined
    lengths of $s$ and $t$ that $\Sigma\cup \{s=t\}$
  entails $x=y$. If the common prefix of $s$ and $t$ is empty, 
  then $s$ and $t$ are already distinct variables, so
  $\Sigma \cup \{s=t\}$ entails $x=y$. Otherwise we may assume that
  $s$ is $PQ(x)$ and $t$ is $PQ(y)$ where $PQ$ is the 
  prefix, $P$ is a possibly empty string, and $Q$ is either $\ell$ or $r$.
  Now, by substituting the terms 
  $x_{\ell}\cdot x_r$ and   $y_{\ell}\cdot y_r$ for $x$ and $y$,
  where $x_{\ell}, x_r, y_{\ell}, y_r$ are
  distinct new variables,
  we find that 
  $\Sigma\cup \{s=t\}$ entails
  $PQ(x_{\ell}\cdot x_r) = PQ(y_{\ell}\cdot y_r)$,
  which reduces
  modulo $\Sigma$ to $P(x_Q)=P(y_Q)$. By induction,
  $\Sigma\cup \{P(x_Q)=P(y_Q)\}$ entails $x=y$, so we
  conclude that $\Sigma\cup \{s=t\}$
  entails $x=y$. 
  \end{caseproof}
    
  \bigskip
  
  \begin{case}\label{case3}
$s$ and $t$ have different variables and different prefixes.
  \end{case}

\begin{caseproof}    
  In this case we also show that $\Sigma\cup \{s=t\}$
  entails $x=y$.
 We may assume that $s=t$ is expressible as $P(x)=P'(y)$, where
 $P$ is nonempty and is the prefix of $s$, while $P'$ may be an empty
 string and is the prefix of $t$.
Substituting $z$ for $y$ we derive
$P(x)=P'(z)$ from $P(x)=P'(y)$. From $P'(y)=P(x)=P'(z)$
we derive the identity
$P'(y)=P'(z)$.
Citing Case~\ref{case2} we conclude that
$\Sigma\cup \{s=t\}$ entails $x=y$.
\end{caseproof}
  
\bigskip
  
  \begin{case}\label{case4}
    $s$ and $t$ have the same variables, but different
    rightmost operation symbols.
  \end{case}

  \begin{caseproof}
    In this case we express $s=t$ as $PQ(x)=P'Q'(x)$
  where $Q$ is a single operation symbol, $\ell$ or $r$, 
  $Q\neq Q'$, 
  the length of $PQ(x)$ is at least that of $P'Q'(x)$,
  and we allow $P'Q'$ to be empty.

  We first treat the case where $P'Q'$ is not empty,
  so $\{Q, Q'\}=\{\ell, r\}$.
  Substituting $x_{\ell}\cdot x_r$ for $x$, where $x_{\ell}$
  and $x_r$ are new variables, we derive from 
  $PQ(x)=P'Q'(x)$ the identity $P(x_{Q})=P'(x_{Q'})$.
  This is an identity of the type handled in either
  Case 2 or Case 3.
  Hence 
  $\Sigma\cup\{s=t\}$ 
  entails $x=y$.

  Next we treat the case where $P'Q'$ is empty, so
  $s=t$ is $PQ(x) = x$. We assume that $Q=\ell$,
  and omit the argument for the similar case where $Q=r$.
  Thus, it is our aim to show that $\Sigma\cup\{P\ell(x)=x\}$
  entails $x=y$. Substitute $x_{\ell}\cdot x_r$ for $x$
  in $P\ell(x)=x$ and apply $r$ to both sides to obtain
  the second equality in 
  \[
rP (x_{\ell}) = r P \ell (x_{\ell}\cdot x_r) = r(x_{\ell}\cdot x_r) = x_r.
\]
Thus, from
$\Sigma\cup\{P\ell(x)=x\}$ we have derived $rP(x_{\ell})=x_r$,
which is an identity of the type considered in Case~\ref{case3}.
From the Case~\ref{case3} argument we derive $x=y$.
\end{caseproof}
  
\bigskip

\begin{case}\label{case5}
    $s$ and $t$ have the same variables, and the same
    rightmost operation symbols.
  \end{case}

\begin{caseproof}  
If the prefixes of $s$ and $t$ are nonempty, then $s=t$ 
is expressible as $PQ(x) = P'Q(x)$
where $Q$ is a single operation symbol, $\ell$ or $r$. 
By substituting $x_{\ell}\cdot x_r$ for $x$ and 
reducing modulo $\Sigma$, we obtain 
$P(x_Q) = P'(x_Q)$. Now either we are in Case 4, 
from which it follows that $\Sigma \cup \{s=t\}$ entails 
$x=y$, or else we are back in Case 5 but with terms 
of strictly shorter length. If we are back in Case 5, we repeat the argument, and
we either eventually reach Case 4, or we reach an expression
of the form $x=x$ and conclude that $s$ and $t$ 
had exactly the same prefixes and the same variables, hence
$\Sigma$ entails $s=t$. Thus the assertion
of the lemma holds.
\end{caseproof}  
\end{proof}  

\begin{corollary} \label{minl}
The variety of J\'{o}nsson-Tarski algebras is minimal.
\end{corollary}

\begin{proof}
  If the variety $\mathcal J$ axiomatized by $\Sigma$
  were not minimal, then
  there would exist an identity $s=t$
  that does not hold throughout $\mathcal J$,
  but which holds in some minimal subvariety of $\mathcal J$.
    For this identity
  we would have $\Sigma\not\models s=t$ and 
  $\Sigma\cup\{s=t\}\not\models x=y$, contrary to
  Lemma~\ref{NF2}.
\end{proof}

\section{J\'{o}nsson J\'{o}nsson-Tarski algebras}\label{JJT}

Our goal in this section is to construct a
J\'{o}nsson algebra on the set $\omega_1$
in the variety of J\'{o}nsson-Tarski algebras.
We start with the easier project of constructing a
J\'{o}nsson J\'{o}nsson-Tarski algebra structure on $\omega$.

To construct a J\'{o}nsson-Tarski algebra
on a set $J$, it suffices to describe the
table for the multiplication operation,
since it is possible to read the tables for $\ell$
and $r$ off of the multiplication table.
Moreover, the only condition that a multiplication
table must satisfy for it to be a table for a
J\'{o}nsson-Tarski algebra is that it be
the table of a bijection
$J\times J\to J$, which means that every element of $J$
occurs in one and only one cell of the
multiplication table.

\begin{theorem}\label{ctblejonssonthm}
  There exists a J\'{o}nsson J\'{o}nsson-Tarski algebra,
  $J_{\omega}$, with universe $\omega$.
\end{theorem}

\begin{proof}
  We define our J\'{o}nsson J\'{o}nsson-Tarski algebra
with the following multiplication table:
  
\bigskip

\begin{center}
\begin{tabular}{|c || c | c | c | c | c | c}
\hline
$\cdot$ & 0 & 1 & 2 & 3 & 4 & $\cdots$ \\
\hline
\hline
0 &  1 & 2 & 5 & 9 & 14 & $\cdots$ \\
\hline
1 & 0 & 4 & 8 & 13 & 19 & $\cdots$ \\
\hline
2 & 3 & 7 & 12 & 18 & 25 & $\cdots$ \\
\hline
3 & 6 & 11 & 17 & 24 & 32 & $\cdots$ \\
\hline
4 & 10 & 16 & 23 & 31 & 40 & $\cdots$\\
\hline
$\vdots$ & $\vdots$ & $\vdots$ & $\vdots$ & $\vdots$ & $\vdots$ & $\ddots$
\end{tabular}
\end{center}

\bigskip

This table was created by first
placing the numbers $0, 1$, and $2$ in the upper left corner
in a certain pattern. The rest of the table
was filled by placing the remaining natural numbers
in the remaining empty cells in increasing order moving
diagonally up and to the right.

In this table, the product $m\cdot n$
is placed in the cell located in the $m$-th row and $n$-th column.
For example, $3\cdot 4=32$, by definition, so $32$ is placed in
the cell in the $3$rd row and $4$th column.
To read off the $\ell$ and $r$ operations
from this multiplication
table, recall that any $n\in\omega$ occurs in one and only 
one cell of the table. The value of $\ell(n)$ is the row header
for that cell containing $n$,
while $r(n)$ is the column header for that cell.
For example, $\ell(13)=1$ and $r(23)=2$.

An important feature of this algebra, $J_{\omega}$, is that
the functions $\ell$ and $r$ are regressive.
That is, $\ell(n)<n$ and $r(n)<n$ when $n\neq 0$.
This can be checked by hand for $n=1$ and $2$,
and then proved for larger $n$ using the following formula
for the multiplication
\[
p\cdot q = \binom{p+q+1}{2} + q,\quad {\rm if}\; p+q>1,
\]
which is not hard to establish.

An immediate consequence of regressiveness is:

\begin{clm}
  If $n \in \omega$, then $0$ is in the subalgebra
  of $J_{\omega}$ generated by $\{n\}$.
\end{clm}
\begin{cproof}
  Since $\ell$ is regressive,
  the sequence $n, \ell(n), \ell(\ell(n)), \dots$
  must eventually include $0$.
\end{cproof}
\bigskip

\begin{clm}
$J_{\omega}$ is generated by $\{0\}$.
\end{clm}

\begin{cproof}
  Suppose otherwise, and let $n\in\omega$ be the least
  natural number not in the subalgebra $S=\langle \{0\}\rangle$.
  Clearly $n>0$, so $\ell(n), r(n)<n$, which implies
  that $\ell(n), r(n)\in S$.
  But now $n = \ell(n)\cdot r(n)\in S$, contradicting the choice of $n$.
\end{cproof}

\bigskip

The claims show that any $n\in\omega$ generates $J_{\omega}$,
so $J_{\omega}$ has no proper (nonempty) subalgebras.
This establishes that $J_{\omega}$ is J\'{o}nsson.
\end{proof}

It is impossible to construct a
J\'{o}nsson J\'{o}nsson-Tarski algebra $J_{\omega_1}$
on $\omega_1$ that has
the same properties as $J_{\omega}$,
namely the properties that
$\ell$ and $r$ are regressive functions,
and that each element of $J_{\omega}$ generates the whole algebra.
In the first place, single elements can only generate
countable subalgebras, so the uncountable algebra $J_{\omega_1}$
cannot be $1$-generated (or even countably generated).
In the second place, as pointed out to us by Don Monk,
Fodor's Lemma prevents the existence of a
J\'{o}nsson J\'{o}nsson-Tarski algebra on $\omega_1$ which has both
$\ell$ and $r$ regressive.
To see this, note that if $\ell$ is regressive on $\omega_1$,
then it is constant on a set $S$ stationary in $\omega_1$.
If $r$ is also regressive on $\omega_1$, so regressive on $S$,
then $r$ is constant on a stationary subset $T\subseteq S$.
If $\ell$ and $r$ are both constant on $T$, then the map
$\omega_1\to \omega_1\times\omega_1: \alpha\mapsto (\ell(\alpha),r(\alpha))$
is constant on $T$. But in a J\'{o}nsson-Tarski
algebra this map is a bijection.

Nevertheless, we do have:

\begin{theorem} \label{uncountable}
  There exists a J\'{o}nsson J\'{o}nsson-Tarski algebra,
  $J_{\omega_1}$, with universe $\omega_1$.
\end{theorem}

\begin{proof}
We will construct the multiplication table for
$J_{\omega_1}$ by transfinite recursion.
The main part of the argument will involve
explaining, for each countable limit ordinal
$\lambda$, how to extend a given
J\'{o}nsson-Tarski multiplication on $\lambda$
to a J\'{o}nsson-Tarski
multiplication on $\lambda+\omega$,
thereby enlarging a
J\'{o}nsson-Tarski algebra $J_{\lambda}$
to a J\'{o}nsson-Tarski superalgebra $J_{\lambda+\omega}$.
Our construction will be guided by the desire
to maintain the following Property $\Omega$:

  \begin{quote}
    For each pair of countable limit ordinals $\alpha<\beta$
    the subalgebra of $J_{\beta}$ generated
    by any element of the form $\alpha+n$, $n\in\omega$, is $\alpha+\omega$.
    \end{quote}

  
  Theorem~\ref{ctblejonssonthm} provides
  a J\'{o}nsson-Tarski multiplication  defined on $\omega$ 
  which satisfies Property $\Omega$.
  
  Now assume that $\lambda$ is an infinite countable limit ordinal, and
  that we have a J\'{o}nsson-Tarski multiplication
  with Property $\Omega$ defined on $\lambda$.
  We explain how to extend the $\lambda\times\lambda$
  J\'{o}nsson-Tarski multiplication
  table that has Property $\Omega$ to a 
  $(\lambda+\omega)\times (\lambda+\omega)$
  multiplication
  table that has Property $\Omega$.
  
  Call $\lambda+n$ ``even'' if $n$ is even,
  and ``odd'' if $n$ is odd. Say that $\lambda+n$ is
  ``divisible by $4$'' or is 
  ``$0\pmod{4}$'' if $n\equiv 0\pmod{4}$. ETC.
  
  We first explain where to place the value $\lambda+n$ in the
  multiplication table of $J_{\lambda+\omega}$
  when $\lambda+n$ is divisible by $4$.
  Place this ordinal in the cell whose column header
  is $r(\lambda+n)=\lambda+n+2$, and whose row header
  is specified as follows. Choose $\ell$ to be any bijection from the
  countably infinite set of ordinals in
  $\{\lambda+k \mid k\equiv 0\pmod{4}\}$
  onto the set $\lambda$,
  and then the row header for the cell containing $\lambda+n$
  will be $\ell(\lambda+n)$.

  For the ordinals $\lambda+n$ that are $2\pmod{4}$,
  we again define $r(\lambda + n) = \lambda + n + 2$.
  Then we let $\ell$ be the map defined by
  $\ell(\lambda + n) = \lambda + \frac{n-2}{4}$.
  The function $\ell$ maps the set of ordinals that are $2\pmod{4}$
  bijectively onto the set $\{\lambda + k \mid k \in \omega\}$.

  So far, we have defined $\ell$ so that it is a
  bijection from the set of even ordinals $\geq \lambda$
  onto the set $\lambda + \omega$, while $r(\lambda+n)=\lambda+n+2$ holds
  whenever $\lambda+n$ is even. The multiplication table
  as described so far is indicated in Figure~\ref{jtpicture1}.
  The placement in the figure of the ordinals that are divisible by $4$
  is only suggestive: they are shown in the correct columns,
  but all we know about which rows they occupy is that there
  is one ordinal of the form $\lambda+n$, $n\equiv 0\pmod{4}$,
  in each row whose row header is $< \lambda$.

\begin{figure}[ht]
\begin{tikzpicture}[scale=0.7]
\draw [thick, pattern = north east lines] (0,16) rectangle (3,13);
\draw [thick, pattern = north east lines ] (3.5,16) rectangle (6.5,13);
\draw [thick, pattern = north east lines] (0,12) rectangle (3,9);
\draw [thick, pattern = north east lines] (3.5,12) rectangle (6.5,9);
\draw [thick] (0,8) rectangle (3,2);
\draw [thick] (3.5, 8) rectangle (6.5,2);
\draw [thick] (7,8) rectangle (20,2);
\draw [thick] (7, 12) rectangle (20, 9);
\draw [thick](7, 16) rectangle (20, 13);

\foreach \y in {3,...,7}
{
	\draw [thin] (0, \y) -- (3, \y);
	\draw [thin] (3.5, \y) -- (6.5, \y);
	\draw [thin] (7, \y) -- (20, \y);
}
\foreach \x in {8,...,19}
{
	\draw [thin] (\x, 8) -- (\x, 2);
	\draw [thin] (\x, 12) -- (\x, 9);
	\draw [thin] (\x, 16) -- (\x, 13);
}
\foreach \y in {10,11}
	\draw [thin] (7, \y) -- (20, \y);
\foreach \y in {14,15}
	\draw [thin] (7, \y) -- (20, \y);
\foreach \x in {1,2}
	\draw [thin] (\x, 2) -- (\x, 8);
\foreach \x in {4,...,7}
{
	\draw [thin] (\x - 0.5, 2) -- (\x - 0.5, 8);
}

\node at (7.5, 16.5) {\tiny $\lambda$};
\foreach \x in {0,1,2}
	\node at (\x+.5, 16.5) {\small \x};
\node at (4, 16.5) {\tiny $\omega$};
\foreach \x in {5,...,6}
{
	\pgfmathtruncatemacro{\n}{\x-4};
	\node at (\x, 16.5) {\tiny $\omega$+\n};
}
\foreach \x in {8,...,19}
{
	\pgfmathtruncatemacro{\n}{\x-7};
	\node at (\x+.5, 16.5) {\tiny $\lambda$+\n};
}
\foreach \y in {13,14,15}
{
	\pgfmathtruncatemacro{\n}{15-\y};
	\node at (-0.5, \y + 0.5) {\small \n};
}
\node at (-0.6, 11.5) {\tiny $\omega$};
\foreach \y in {9,...,10}
{
	\pgfmathtruncatemacro{\n}{11-\y};
	\node at (-0.6, \y + 0.5) {\tiny $\omega$+\n};
}
\node at (-0.6, 7.5) {\tiny $\lambda$};
\foreach \y in {2,...,6}
{
	\pgfmathtruncatemacro{\n}{7-\y};
	\node at (-0.6, \y + 0.5) {\tiny $\lambda$+\n};
}

\node at (3.3, 16.5) {\small $\ldots$};
\node at (6.8, 16.5) {\small $\ldots$};
\node at (-0.5, 12.6) {$\vdots$};
\node at (-0.5, 8.6) {$\vdots$};

\node at (11.5, 7.5) {\tiny $\lambda$+2};
\node at (15.5, 6.5) {\tiny $\lambda$+6};
\node at (19.5, 5.5) {\tiny $\lambda$+10};
\node at (9.5, 15.5) {\tiny $\lambda$};
\node at (13.5, 11.5) {\tiny $\lambda$+4};
\node at (17.5, 14.5) {\tiny $\lambda$+8};
\end{tikzpicture}
\bigskip
\caption{The J\'{o}nsson-Tarski algebra $J_{\lambda + \omega}$ after placing even ordinals.}
\label{jtpicture1}
\end{figure}

Finally, we use the odd ordinals to
populate the rest of this
$(\lambda + \omega) \times (\lambda + \omega)$ table.
We begin by partitioning the set of odd ordinals
above $\lambda$ into countably many countable sets
labelled $L_{\lambda+n}$ for $n \in \omega$, as indicated
by the columns of Table~\ref{Ltable}.


\begin{table}[ht]
\centering
\begin{tabular}{c | c | c | c | c}
\hline
\hphantom{AA}$L_{\lambda}$\hphantom{AA} & $L_{\lambda+1}$ & $L_{\lambda+2}$ & $L_{\lambda+3}$ & $\cdots$ \\ [0.5ex]
\hline
\hline 
$\lambda+1$ & $\lambda+3$ &  &  & \\ 
$\lambda+5$ & $\lambda+7$ & $\lambda+9$ &  & \\
$\lambda+11$ & $\lambda+13$ & $\lambda+15$ & $\lambda+17$ & \\
$\lambda+19$ & $\lambda+21$ & $\lambda+23$ & $\lambda+25$ & $\cdots$\\
\end{tabular}
\bigskip

\caption{Partition of odd ordinals into the sets $L_{\lambda+n}$.}
\label{Ltable}
\end{table}

This way of partitioning of the odd ordinals above $\lambda$
has the property that $\lambda+n \in L_{\lambda+m}$ implies $m < n$.

Let each set $L_{\lambda+n}$
fill the L-shaped region of the table in Figure~\ref{jtpicture2}
corresponding to $\lambda+n$.
By that, we mean the region containing
all cells at addresses $(\lambda+n, \alpha)$
and $(\alpha, \lambda+n)$ for all $\alpha \leq \lambda+n$.
We can do this since the set of these cells is
countably infinite, and so is $L_{\lambda+n}$.
It could be, from our placement of the even ordinals,
that some of the cells in this region are already occupied.
However, it can be easily seen that at most two 
cells in each L-shaped region will be occupied by even ordinals,
since no two even ordinals
were assigned the same $\ell$-value and no two even
ordinals were assigned the same $r$-value.
Thus, each L-shaped region
will still have infinitely many cells
in which to place the elements of $L_{\lambda+n}$.
See Figure~\ref{jtpicture2}.

\begin{figure}[ht] 
\begin{tikzpicture}[scale=0.7]
\draw [thick, pattern = north east lines] (0,16) rectangle (3,13);
\draw [thick, pattern = north east lines ] (3.5,16) rectangle (6.5,13);
\draw [thick, pattern = north east lines] (0,12) rectangle (3,9);
\draw [thick, pattern = north east lines] (3.5,12) rectangle (6.5,9);
\draw [thick] (0,8) rectangle (3,2);
\draw [thick] (3.5, 8) rectangle (6.5,2);
\draw [thick] (7,8) rectangle (20,2);
\draw [thick] (7, 12) rectangle (20, 9);
\draw [thick](7, 16) rectangle (20, 13);

\foreach \y in {3,...,7}
{
	\draw [thin] (0, \y) -- (3, \y);
	\draw [thin] (3.5, \y) -- (6.5, \y);
	\draw [thin] (7, \y) -- (20, \y);
}
\foreach \x in {8,...,19}
{
	\draw [thin] (\x, 8) -- (\x, 2);
	\draw [thin] (\x, 12) -- (\x, 9);
	\draw [thin] (\x, 16) -- (\x, 13);
}
\foreach \y in {10,11}
	\draw [thin] (7, \y) -- (20, \y);
\foreach \y in {14,15}
	\draw [thin] (7, \y) -- (20, \y);
\foreach \x in {1,2}
	\draw [thin] (\x, 2) -- (\x, 8);
\foreach \x in {4,...,7}
{
	\draw [thin] (\x - 0.5, 2) -- (\x - 0.5, 8);
}

\node at (7.5, 16.5) {\tiny $\lambda$};
\foreach \x in {0,1,2}
	\node at (\x+.5, 16.5) {\small \x};
\node at (4, 16.5) {\tiny $\omega$};
\foreach \x in {5,...,6}
{
	\pgfmathtruncatemacro{\n}{\x-4};
	\node at (\x, 16.5) {\tiny $\omega$+\n};
}
\foreach \x in {8,...,19}
{
	\pgfmathtruncatemacro{\n}{\x-7};
	\node at (\x+.5, 16.5) {\tiny $\lambda$+\n};
}
\foreach \y in {13,14,15}
{
	\pgfmathtruncatemacro{\n}{15-\y};
	\node at (-0.5, \y + 0.5) {\small \n};
}
\node at (-0.6, 11.5) {\tiny $\omega$};
\foreach \y in {9,...,10}
{
	\pgfmathtruncatemacro{\n}{11-\y};
	\node at (-0.6, \y + 0.5) {\tiny $\omega$+\n};
}
\node at (-0.6, 7.5) {\tiny $\lambda$};
\foreach \y in {2,...,6}
{
	\pgfmathtruncatemacro{\n}{7-\y};
	\node at (-0.6, \y + 0.5) {\tiny $\lambda$+\n};
}

\node at (3.3, 16.5) {\small $\ldots$};
\node at (6.8, 16.5) {\small $\ldots$};
\node at (-0.5, 12.6) {$\vdots$};
\node at (-0.5, 8.6) {$\vdots$};

\node at (11.5, 7.5) {\tiny $\lambda$+2};
\node at (15.5, 6.5) {\tiny $\lambda$+6};
\node at (19.5, 5.5) {\tiny $\lambda$+10};
\node at (9.5, 15.5) {\tiny $\lambda$};
\node at (13.5, 11.5) {\tiny $\lambda$+4};
\node at (17.5, 14.5) {\tiny $\lambda$+8};

\draw [thin, pattern = north west lines] (0,7) rectangle (3,6);
\draw [thin, pattern = north west lines] (3.5,7) rectangle (6.5,6);
\draw [thin, pattern = north west lines] (7,7) rectangle (9,6);
\draw [thin, pattern = north west lines] (8,8) rectangle (9,7);
\draw [thin, pattern = north west lines] (8,12) rectangle (9,9);
\draw [thin, pattern = north west lines] (8,16) rectangle (9,13);

\draw [thin, pattern = north west lines] (0,4) rectangle (3,3);
\draw [thin, pattern = north west lines] (3.5,4) rectangle (6.5,3);
\draw [thin, pattern = north west lines] (7,4) rectangle (12,3);
\draw [thin, pattern = north west lines] (11,7) rectangle (12,4);
\draw [thin, pattern = north west lines] (11,12) rectangle (12,9);
\draw [thin, pattern = north west lines] (11,16) rectangle (12,13);
\end{tikzpicture}
\bigskip
\caption{Placement of the odd ordinals. The set $L_{\lambda+1}$ will fill the L-shaped region corresponding to $\lambda+1$, and similarly for the set $L_{\lambda+4}$.}
\label{jtpicture2}
\end{figure}

Once the elements
of the sets $L_{\lambda+n}$ have been placed in the table,
we will have filled the square
$(\lambda + \omega) \times (\lambda + \omega)$.
Moreover, we have arranged that, whenever
$\lambda + n \in L_{\lambda+m}$ we have $m < n$,
and we either have $\ell(\lambda+n) = \lambda+m$
or $r(\lambda+n) = \lambda+m$. We now argue
that this construction extends the
original $\lambda\times \lambda$ table in a  way that preserves
Property $\Omega$.

We assume that Property $\Omega$ holds for the
subalgebra $J_{\lambda}$ of $J_{\lambda+\omega}$, so
to show that Property $\Omega$ holds for $J_{\lambda+\omega}$ 
it suffices to show that any $\lambda + n$ generates the set $\lambda+\omega$.
As a first step, we argue that any $\lambda + n$
generates $\lambda$. Toward this end, we argue
that any $\lambda+n$, $n>0$, generates some $\lambda + m$, where $m < n$. 
If $n$ is odd, then
$\lambda + n$ belongs to $L_{\lambda+m}$ for some $m < n$,
hence either $\ell(\lambda + n) = \lambda+m$ or $r(\lambda + n) = \lambda+m$. 
If $n$ is $2\pmod{4}$, then $\ell(\lambda + n) = \lambda + \frac{n-2}{4}$,
and $\frac{n-2}{4} < n$.
Finally, if $n$ is $0\pmod{4}$,
then $\ell(r(\lambda + n)) = \ell(\lambda + n + 2) = \lambda + \frac{(n+2)-2}{4} = \lambda + \frac{n}{4}$,
and $\frac{n}{4} < n$. This completes the proof
that any $\lambda + n$ generates some smaller $\lambda + m$,
hence generates $\lambda$.

Now, $\lambda$ generates the set of all even ordinals
between $\lambda$ and $\lambda + \omega$, since
$r$ adds $2$ to any even ordinal.
Next, $\ell$ is a bijection from the set of even ordinals
between $\lambda$ and $\lambda + \omega$ onto the set
$\lambda + \omega$. So, once we have generated all of
the even ordinals, we can generate all of $\lambda + \omega$
by applying $\ell$.

This concludes the argument that Property $\Omega$
extends from $\lambda$ to $\lambda+\omega$.

Since we have defined a
J\'{o}nsson-Tarski algebra $J_{\lambda+\omega}$
on $\lambda+\omega$ for any countable limit ordinal $\lambda$,
and done it in a way so that the table on
$\lambda+\omega$ extends the table on $\lambda$,
and Property $\Omega$ is preserved at each extension stage,
we get a J\'{o}nsson-Tarski algebra $J_{\omega_1}$
on $\omega_1$ satisfying Property $\Omega$.
It follows that the subalgebras of $J_{\omega_1}$
are exactly the unions of sets of the form $\lambda+\omega$.
That is, they are exactly the countable limit ordinals.
This is enough to prove that $J_{\omega_1}$ is J\'{o}nsson.
\end{proof}

\bibliographystyle{plain}

\end{document}